\author{Nicolae Mihalache}
\address{KTH - Royal Institute of Technology \newline
         Department of Mathematics \newline
         100 44 Stockholm, Sweden \newline \newline
         Supported by the \emph{Knut and Alice Wallenberg Foundation \newline}}
\email{nicolae@kth.se}
\title{Julia and John revisited}
\date{}
\newtheorem*{koebe}{Koebe Theorem}
\newcounter{count}
\newtheorem{thm}[count]{Theorem}
\newtheorem{prop}[count]{Proposition}
\newtheorem{lem}[count]{Lemma}
\newtheorem{coro}[count]{Corollary}
\newtheorem*{rem}{Remark}
\theoremstyle{remark}
\newcommand{\N}{\mathbb{N}}
\newcommand{\R}{\mathbb{R}}
\newcommand{\C}{\mathbb{C}}
\newcommand{\al}{\alpha}
\newcommand{\be}{\beta}
\newcommand{\ga}{\gamma}
\newcommand{\de}{\delta}
\newcommand{\De}{\Delta}
\newcommand{\te}{\theta}
\newcommand{\om}{\omega}
\newcommand{\Om}{\Omega}
\newcommand{\si}{\sigma}
\newcommand{\la}{\lambda}
\def\ra{{\rightarrow}}
\def\se{{\subseteq}}
\def\ve{{\varepsilon}}
\def\ft{{\infty}}
\def\sm{{\setminus}}
\def\es{{\emptyset}}
\def\pa{{\partial}}
\def\RS{{\overline\C}}
\def\mo{^{-1}}
\def\ma{{\mbox{ and }}}
\newcommand\MO[1]{\mathop{\mathrm{#1}}\nolimits}
\newcommand\MOo[2]{\mathop{\mathrm{#1}}\nolimits\left(#2\right)}
\newcommand\Deg[2]{\deg_{#1}\left({#2}\right)}
\newcommand\D{\mathbb{D}}
\newcommand\SetEnu[2]{\ensuremath{\left\{{#1},\ldots,{#2}\right\}}}
\newcommand\SetDef[2]{\ensuremath{\left\{{#1}\ :\ {#2}\right\}}}
\def\SE{\SetEnu}
\newcommand\ol[1]{\ensuremath{\overline{#1}}}
\newcommand\sqno[1]{\ensuremath{\left({#1}_{n}\right)_{n\geq 1}}}
\newcommand\sqnz[1]{\ensuremath{\left({#1}_{n}\right)_{n\geq 0}}}
\newcommand\fd[2]{\ensuremath{:{#1}\rightarrow{#2}}}
\newcommand\limi[2]{\ensuremath{\lim\limits_{{#1}\ra\ft}{#2}}}
\newcommand\rthm[1]{{Theorem \ref{thm#1}}}
\newcommand\rpro[1]{{Proposition \ref{prop#1}}}
\newcommand\rlem[1]{{Lemma \ref{lem#1}}}
\newcommand\rcor[1]{{Corollary \ref{cor#1}}}
\newcommand\requ[1]{{(\ref{equ#1})}}
\newcommand\abs[1]{\ensuremath{\left|{#1}\right|}}
\newcommand\A{\ensuremath{\mathcal A}}
\def\dia{\MO{diam}}
\def\cmp{\MO{Comp}}
\def\dst{\MO{dist}}
\def\lift{\MO{lift}}
\def\deg{\MO{deg}}
\def\cri{\MO{Crit}}
\def\qhl{l_{qh}}
\def\qhd{\dst_{qh}}
\begin{document}

\begin{abstract}
We show that Fatou components of a semi-hyperbolic rational map are John domains and that the converse does not hold. This generalizes a famous result of Carleson, Jones and Yoccoz. 

We show that a connected Julia set is locally connected for a large class of non-uniformly hyperbolic rational maps. This class is more general than semi-hyperbolicity and includes Collet-Eckmann and Topological Collet-Eckmann maps and maps verifying a summability condition (as considered by Graczyk and Smirnov).
\end{abstract}

\maketitle

\section{Introduction}

Hyperbolic rational dynamics is very well understood and Julia sets of hyperbolic maps have good geometric and statisctial properties. Allow critical points in the Julia set and one may lose these good geometric and statistical properties. During the last two decades various classes of rational maps have been considered which display some form of non-uniform hyperbolicity. Such classes include sub-hyperbolic (\cite{CG}, \cite{J}), semi-hyperbolic (\cite{JJ}), Collet-Eckmann (\cite{CEH}),  Topological Collet-Eckmann (\cite{ETIC}), Recurrent Collet-Eckmann (\cite{MCN1}) and maps verifying a summability condition (\cite{NUH}). Maps from these classes retain some of the good geometric and statistical properties of the hyperbolic setting. The main result of Carleson, Jones and Yoccoz in \cite{JJ} states equivalence for \emph{polynomial} maps between a geometric condition, John regularity of the basin of infinity, and a topological condition on critical orbits, semi-hyperbolicity. 

In Theorem 1, we extend the result of Carleson \emph{et al} to the rational setting: semi-hyperbolicity implies John regularity for all components of the Fatou set; however, equivalence does not hold. In Theorem 2, we prove local connectivity of connected Julia sets for all of the classes of rational maps mentioned in the first paragraph.

Let $f$ be a rational map of degree at least $2$. We say that $f$ is \emph{semi-hyperbolic} if it has no parabolic cycles and all critical points in its Julia set $J$ are \emph{non-recurrent}. We say that $x$ is non-recurrent if $x\notin\om(x)$ where $\om(x)$ is the accumulation set of the orbit of $x$, 
$$\om(x)=\bigcap_{N\geq 0}\ol{\SetDef{f^n(x)}{n\geq 0}}.$$

A domain $\Om\se\RS$ is an $\ve$-\emph{John domain} if there is $z_0\in\Om$ such that for all $z_1\in\Om$ there exists an arc $\ga\se\Om$ connecting $z_1$ to $z_0$ and for all $z\in\ga$
$$\de(z)\geq\ve\de(z,z_1),$$
where $\de$ denotes the distance with respect to the spherical metric $\si$ and by $\de(z)$ we mean $\de(z,\pa\Om)$.

A closed set $A\se\RS$ is called \emph{locally connected} if for every $\tau>0$ there is $\te>0$ such that, for any two points $a,b\in A$ with $\de(a,b)<\te$, we can find a continuum B (i.e. compact connected set with more than one point) with
$$a,b\in B,\ \dia B<\tau.$$

As a consequence of the main result of \cite{MCN1}, semi-hyperbolic rational maps satisfy the \emph{Exponential Shrinking of components} condition $(ExpShrink)$. This property was proved for semi-hyperbolic polynomials in \cite{JJ}. By \cite{ETIC}, $ExpShrink$ is equivalent to \emph{Topological Collet-Eckmann} condition $(TCE)$ and to several standard conditions for non-uniform hyperbolicity in rational dynamics.

A rational map $f$ satisfies the \emph{Exponential Shrinking of components}
condition if there are $\lambda>1,\ r>0$ such that
for all $z\in J,\ n>0$ and every connected component $W$ of $f^{-n}(B(z,r))$
$$\MO{diam}{W}<\lambda^{-n}.$$

We introduce a weaker version of $ExpShrink$. We say that a rational map $f$ satisfies the \emph{Summable Shrinking of components} condition ($SumShrink$) if there are $r>0$ and $\sqno \om$ a sequence of positive numbers such that
$$\sum\limits_{n>0}\om_n<\ft,$$
and for all $z\in J,\ n>0$ and every connected component $W$ of $f^{-n}(B(z,r))$
$$\MO{diam}{W}<\om_n.$$

This property rules out the existence of rotation domains, Cremer points and parabolic cycles. Therefore, by the classification of periodic Fatou components (see Theorem IV.2.1 in \cite{CG}),  the Julia set of such a map which has no attracting cycles is the Riemann sphere. 

In this paper we prove the following facts.

\begin{thm}
Fatou components of a rational semi-hyperbolic map are John domains. The converse does not hold.
\label{thmMain}
\end{thm}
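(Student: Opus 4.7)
The plan for the forward direction is to combine the classification of periodic Fatou components with the $ExpShrink$ property, which semi-hyperbolicity implies via \cite{MCN1} (as recalled in the introduction). Since $ExpShrink$ forces $SumShrink$, the discussion following $SumShrink$ rules out parabolic cycles, Cremer points and rotation domains, so every periodic Fatou component of $f$ is an attracting basin, geometric or super-attracting. Every other Fatou component is a pre-periodic preimage of such a basin; since the pre-period is finite, a pullback by a branched cover of finite degree sends a John domain to a John domain (with a degraded but still positive constant), so it suffices to prove that each immediate attracting basin $U$ of period $p$ is John.

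Fix such a $U$, set $g=f^p$, and choose the John center $z_0\in U$ inside a linearization (or B\"ottcher) chart in which $U$ contains a round disk $D_0$ centered at $z_0$ where $g$ acts as a conformal (or power-like) contraction. Given $z_1\in U$, let $N$ be the smallest integer with $g^N(z_1)\in D_0$, set $w_k=g^{N-k}(z_1)$, and let $W_k$ be the connected component of $g^{-k}(D_0)$ containing $w_k$, so that $g(W_k)\se W_{k-1}$, $W_0=D_0$, and $z_1\in W_N$. Build $\ga$ by successively pulling back, through the appropriate inverse branches of $g$, a fixed straight segment in $D_0$ joining $w_0$ to $z_0$, and finally joining $z_1$ to the terminal point inside $W_N$. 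By $ExpShrink$ applied to $g$ (after shrinking $D_0$ if necessary) one has $\dia(W_k)\lesssim\lambda^{-(N-k)}$; non-recurrence of $\cri(f)\cap J$ ensures that the $W_k$ meet $\cri(f)$ for a number of indices bounded independently of $N$, so on each non-critical $W_k$ the Koebe theorem applied to a definite larger pullback avoiding $\cri(f)$ gives bounded distortion, while on each critical $W_k$ one has bounded local degree. The required inequality $\de(z)\geq\ve\de(z,z_1)$ for $z\in\ga\cap W_k$ then follows from $\de(z,z_1)\lesssim\dia(W_k)$ and $\de(z,\pa U)\gtrsim\dia(W_k)$ with constants independent of $N$. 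The chief technical obstacle is keeping the Koebe constants uniform along an arbitrarily long chain of pullbacks; this is handled by the existence of a definite annular neighborhood of $\om\bigl(\cri(f)\cap J\bigr)$ separated from $\cri(f)$ itself, an annulus provided precisely by the non-recurrence assumption.

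For the converse the aim is to exhibit a rational map $f$ with a recurrent critical point in $J(f)$ whose Fatou components are nonetheless John. By Carleson--Jones--Yoccoz this cannot happen when $f$ is a polynomial, so one must exploit the rational flexibility. A natural construction is to start from a hyperbolic rational map $f_0$ whose immediate basins are already John, and to modify it on a small puzzle piece $V$ disjoint from those basins so that the new restriction is a polynomial-like map carrying a Collet--Eckmann, and therefore recurrent, critical point inside its small Julia set in $V$. If $V$ lies deep in $J(f_0)$ and the first-return map to $V$ has bounded degree, the John property of the Fatou components of $f_0$ survives under the modification, while the recurrent critical point inside $V$ destroys semi-hyperbolicity; the delicate point is to verify that the John constants are not ruined by the pullbacks that now pass through the boundary of $V$, which is ensured by the bounded-degree hypothesis on the return.
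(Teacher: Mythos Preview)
Your geometric setup does not work as written: the sets $W_k=\cmp_{w_k}g^{-k}(D_0)$ do \emph{not} shrink. In an immediate attracting basin, once $k$ exceeds a fixed $k_0$ the intersection $g^{-k}(D_0)\cap U$ is connected (it already contains arcs from $p$ to every preimage of $p$ in $U$), so your $W_k$ coincide with the increasing exhaustion $\cmp_{p}\,g^{-k}(D_0)\nearrow U$ and $\dia W_k\to\dia U$; the claim $\dia W_k\lesssim\la^{-(N-k)}$ is therefore false. Note too that $ExpShrink$ controls pullbacks of balls centred on $J$, not of a disk $D_0$ sitting deep in the Fatou set. The paper avoids both problems by working through the quasi-hyperbolic characterisation of John (\rlem{Deep}): the arcs $\ga_z$ are assembled from pullbacks of a short piece $\ga'_y$ lying in the thin annulus $V\sm f(V)$, chosen within $r/100$ of $J$ so that $ExpShrink$ applies legitimately (\rlem{QHL}). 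The bounded-degree consequence of non-recurrence that you mention is indeed used, as \requ{Deg}, but it enters not as ``uniform Koebe constants'' --- the relevant pullbacks are not univalent --- rather through a modulus/Schwarz-lemma argument (\rlems{Mod}{Thk}) that converts \requ{Deg} into the crucial lower bound $\de(w')\gtrsim\dia U_2$. That lower bound is exactly your unproved assertion ``$\de(z,\pa U)\gtrsim\dia W_k$'', and it is the heart of the proof.

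\textbf{Converse.} Your surgery is unlikely to yield an example. A polynomial-like restriction with a recurrent Collet--Eckmann critical point has, by the very Carleson--Jones--Yoccoz theorem, a non-John exterior; the Fatou components of the ambient map that abut the little Julia set inherit this failure (John is a boundary-local property, preserved under the straightening quasiconformal map), and no bounded-degree hypothesis on the first return repairs it. The paper gives instead an explicit example from Yampolsky--Zakeri \cite{YZ}: a degree~$2$ rational map with two fixed Siegel disks whose boundaries are disjoint quasicircles, each containing one of the two critical points. Both critical orbits are dense in their boundary circles, hence recurrent, so the map is not semi-hyperbolic; yet every Fatou component is a univalent preimage of a quasidisk, hence John. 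The point is that the recurrence here is rotational on a quasicircle, which is compatible with John regularity in a way that Collet--Eckmann-type recurrence is not.
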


\begin{thm}
If the Julia set of an $SumShrink$ rational map is connected then it is locally connected.
\label{thmLC}
\end{thm}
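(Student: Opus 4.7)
The plan is to verify both conditions of a classical characterization of local connectedness for a connected compact set $K\se\RS$: (A) every complementary component $U$ has locally connected boundary, and (B) for each $\ve>0$ only finitely many complementary components have diameter exceeding $\ve$. In our setting the complementary components are the Fatou components; moreover, since $J$ is connected each Fatou component is simply connected in $\RS$, and in particular has connected boundary.

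First I would classify Fatou components. As observed after the definition of $SumShrink$, rotation domains, Cremer points and parabolic cycles are excluded, and Sullivan's no-wandering-domains theorem then forces every Fatou component to be pre-periodic with its periodic part an attracting or super-attracting basin. If no such basin exists then $J=\RS$ and the claim is trivial, so I may assume there are finitely many immediate basins $V_1,\ldots,V_m$.

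For (B), I would cover each boundary $\pa V_j\se J$ by finitely many balls $B(z_i^{(j)},r/2)$ with centres in $\pa V_j$, where $r$ is the radius from $SumShrink$. Any Fatou component $U$ with $f^n(U)=V_j$ then satisfies $\pa U\se f^{-n}(\pa V_j)\se\bigcup_i f^{-n}(B(z_i^{(j)},r/2))$, and every component on the right has diameter at most $\om_n$ by $SumShrink$. To extract $\dia U\leq C\om_n$ with $C$ uniform in $n$, I would combine this covering with a Koebe-type distortion estimate on a slightly larger pull-back annulus surrounding $\pa V_j$, on which $f^n$ factors through a map of bounded local degree, thereby controlling the geometry of the ``wraps'' of $\pa U$ around $\pa V_j$. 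Combined with the bound $\leq(\deg f)^n$ on the number of such $U$ at level $n$ and with $\om_n\ra 0$, this gives (B).

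For (A), it is enough to treat the immediate basins, since for a pre-periodic component $U$ with $f^n(U)=V_j$ the proper restriction $f^n|_{\overline{U}}:\overline{U}\ra\overline{V_j}$ transports local connectedness of $\pa V_j$ to $\pa U$. For $V_j$ of period $p$, I would take a Riemann map $\phi_j:\D\ra V_j$ and exploit the semi-conjugacy $\phi_j\circ m=f^p\circ\phi_j$, where $m$ is the linearizing model of $f^p$ near the attracting cycle (multiplication by $\la$ in the attracting case, $z\mapsto z^d$ in the super-attracting case). The image under $\phi_j$ of the $n$-th fundamental shell of $m$ in $\D$ is a component of the $n$-th $f^p$-preimage of a fixed annular region whose outer boundary lies in $J$, so $SumShrink$ bounds its diameter by $C'\om_{pn}$; the summability $\sum\om_n<\ft$ then yields, via a telescoping estimate, that radial limits of $\phi_j$ exist and depend uniformly continuously on the angle. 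Hence $\phi_j$ extends continuously to $\overline{\D}$ and $\pa V_j$ is locally connected by Carath\'eodory. The main obstacle will be to run both the distortion step in (B) and the radial-convergence step in (A) under the sole hypothesis of summability, without the John-domain / H\"older-continuity machinery of Theorem~\ref{thmMain}: $\sum\om_n<\ft$ is a substantially more delicate input than exponential shrinking.
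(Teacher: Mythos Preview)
Your overall strategy coincides with the paper's: both reduce to the criterion of Proposition~\ref{propLC} (your conditions (A) and (B)). However, each half of your sketch contains a genuine gap.

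For (B), you assert that on a pull-back annulus $f^n$ ``factors through a map of bounded local degree''. Nothing in $SumShrink$ provides this; the degree of $f^n$ on a preimage component can grow without bound. The paper's substitute is purely topological and uses the hypothesis that $J$ is connected in an essential way: every Fatou component is then simply connected, so by Lemma~\ref{lemSC} a component $U'$ with $f(U')=U$ is a univalent copy of $U$ unless it contains a critical point. Hence all but finitely many Fatou components are \emph{univalent} (degree one) pullbacks of a periodic one. For those the paper takes the annulus $A=U\sm\varphi(\ol{B(0,R)})$ with outer boundary $\pa U\se J$ and $\dst(\pa A)<r/2$; univalence gives $\MO{mod} A_n=\MO{mod} A$, while covering $A$ by $r$-balls centred on $J$ and applying $SumShrink$ forces $\dst(\pa A_n)\ra 0$. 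Lemma~\ref{lemThk} then converts fixed modulus plus vanishing Hausdorff distance into $\dia U_n\ra 0$. Your ball-cover of $\pa V_j$ alone cannot achieve this: even if each of the $N$ pullback pieces has diameter $\leq\om_n$, without the modulus argument there is no control on how they assemble globally.

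For (A), the assertion that the $n$-th fundamental shell has diameter $\leq C'\om_{pn}$ does not follow from $SumShrink$: the shell is a preimage of a full annulus encircling $\pa V_j$, not of a single $r$-ball, and in the super-attracting case $f^{pn}$ has degree $d^n$ on it, so the covering pieces proliferate. One can rescue a pointwise radial-limit argument by restricting to individual radial segments (each lands in one $r$-ball), but promoting this to a continuous extension still requires a uniform estimate in the angle that you have not supplied. The paper avoids this by quoting the integrability criterion of Graczyk--Smirnov: $SumShrink$ together with Koebe gives the quasi-hyperbolic boundary condition for the periodic basins (Lemma~11.1 in \cite{NUH}), and integrability implies local connectivity of each boundary component (Lemma~11.5 and Fact~11.1 in \cite{NUH}); this then passes to all preperiodic components because rational pullbacks preserve local connectivity.
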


We also prove in \rpro{Unif} that if the Julia set of a semi-hyperbolic rational map is connected then there exists $\ve>0$ such that each Fatou component is an $\ve$-John domain. This implies a stronger version of local connectivity, that is $\te$ depends linearly on $\tau$, using the same notation as in the definition. This is related to the Julia set being fractal as defined in \cite{JJ}: small balls centered on $J$ are pushed forward to the large scale with bounded degree. This gives control on the geometric distortion so $J$ resembles itself at any scale. Using that semi-hyperbolic rational maps satisfy $ExpShrink$, it is fairly easy to check that semi-hyperbolicity is equivalent to Julia set being fractal. See Theorem 2.1 in \cite{JJ} for the proof in the polynomial case.

In \cite{JJ}, the existence of the basin of attraction of infinity, which is super-attracting in the polynomial case, and properties of the hyperbolic metric are used to prove relations between the geometry of the Fatou set and the dynamics. John regularity can be better understood in full generality (for domains which are not  simply connected) using the quasi-hyperbolic metric as demonstrated in \cite{Deep}. In our construction we emulate features like equipotential curves and geodesic rays in an arbitrary attracting cycle of a rational map.

Let $\ga\se\Om$ be an arc, we define its quasi-hyperbolic length by
$$\qhl(\ga)=\int_\ga\frac{|d\si(z)|}{\de(z)}.$$
This induces the quasi-hyperbolic distance $\qhd(\cdot,\cdot)$ on $\Om$ by the standard construction.
Let also $l(\ga)$ define the length of $\ga$ with respect to the spherical metric.

The quasi-hyperbolic distance has been used to give an alternative definition of John domains in \cite{Deep}. It has also been extensively employed in \cite{CEH} and \cite{NUH} to study H\"older regularity (defined in the following section) and the more general integrable domains (defined in the last section).

In the polynomial case, local connectivity of connected Julia sets is easier to check. Assume $J$ is connected and let us denote by $A_\infty$ the basin of attraction of infinity. Then $A_\infty$ is simply connected, so if it is a John or even a H\"older domain, the Riemann mapping extends to a H\"older continuous map on $\ol D$. Therefore, by Carath\'eodory's theorem, $J=\pa\A_\ft$ is locally connected.

Every John domain is a H\"older domain and every H\"older domain is an integrable domain. Graczyk and Smirnov show in \cite{NUH} that every connected component of the boundary of an integrable domain is locally connected. Suppose that all Fatou components are integrable domains. \emph{A priori}, this does not imply that $J$ is locally connected even if it is connected, since in general there are infinitely many Fatou components.

\section{John regularity}

In this section we prove the aforementioned results. The first tool relates the quasi-hyperbolic metric to John regularity. It is in fact one implication of the main Theorem in \cite{Deep}. As the proof is reasonably short we include it here for completeness.

\begin{lem}
Let $\Om\se\RS$ be a domain, $z_0\in\Om$ and $M>0$. Suppose that for all $z_1\in\Om$ there exists an arc $\ga\se\Om$ connecting $z_1$ to $z_0$ such that for all (orientation preserving) arc $\ga'\se\ga$ connecting $w_1$ to $w_0$ with 
$$\qhl(\ga')\geq M$$
one has 
$$\de(w_1) \leq \frac12\de(w_0).$$
Then $\Om$ is a $\ve(M)$-John domain.
\label{lemDeep}
\end{lem}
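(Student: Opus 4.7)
The plan is to unfold the $\ve$-John definition along the arc $\ga$ given by the hypothesis. For every $z\in\ga$ the spherical distance $\de(z,z_1)$ is bounded by the spherical length of the sub-arc of $\ga$ from $z_1$ to $z$, so it suffices to estimate this length by $C(M)\cdot\de(z)$ and set $\ve(M)=1/C(M)$. The strategy is to chop the sub-arc into pieces calibrated by the quasi-hyperbolic metric, use the hypothesis to force geometric decay of $\de$ at the endpoints of the pieces, and upgrade this to a uniform bound inside each piece via the fact that $\de$ is $1$-Lipschitz with respect to $\si$.

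Concretely, fix $z_1\in\Om$ and take $z\in\ga$. Subdivide the sub-arc of $\ga$ from $z_1$ to $z$ into consecutive pieces $A_k,A_{k-1},\ldots,A_1$ (ordered so that $A_1$ ends at $z$), each with $\qhl(A_i)=M$ for $i<k$ and $\qhl(A_k)\leq M$, and call the endpoints $z=p_0,p_1,\ldots,p_{k-1},p_k=z_1$. Applying the hypothesis to each $A_i$ with $i<k$ (which has $\qhl\geq M$) gives $\de(p_i)\leq\tfrac12\de(p_{i-1})$, so by induction
$$\de(p_{i-1})\leq 2^{-(i-1)}\de(z)\qquad\text{for }1\leq i\leq k.$$

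The next step is to extend this endpoint estimate to a uniform bound on $\de$ throughout each $A_i$. The key observation is that $\de(\cdot)=\de(\cdot,\pa\Om)$, being a distance to a set, is $1$-Lipschitz with respect to $\si$; hence $|d\log\de|\leq|d\si|/\de$ and integrating along any sub-arc $\ti\ga\se\ga$ joining $w$ to $w'$ yields
$$|\log\de(w)-\log\de(w')|\leq\qhl(\ti\ga).$$
Applied to the sub-arc of $A_i$ from $w$ to $p_{i-1}$ (of $\qhl\leq M$), this gives $\de(w)\leq e^M\cdot 2^{-(i-1)}\de(z)$ for every $w\in A_i$. Hence
$$l(A_i)=\int_{A_i}\de(w)\cdot\frac{|d\si(w)|}{\de(w)}\leq M\cdot\max_{A_i}\de\leq Me^M\cdot 2^{-(i-1)}\de(z),$$
and summing the geometric series produces $\de(z,z_1)\leq\sum_{i=1}^k l(A_i)\leq 2Me^M\de(z)$, so $\ve(M)=1/(2Me^M)$ works.

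The main obstacle is precisely the passage from the discrete doubling at the endpoints $p_i$ to a uniform upper bound on $\de$ inside each piece $A_i$: the hypothesis is mute on sub-arcs of $\qhl<M$, and without the $1$-Lipschitz property of $\de$ nothing prevents $\de$ from spiking arbitrarily high inside such a short piece, which would break the geometric-series estimate. The $1$-Lipschitz fact, translated into the exponential comparison $\de(w)\leq e^M\de(p_{i-1})$, is what bridges the gap and makes the argument quantitative.
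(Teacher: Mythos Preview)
Your proof is correct and follows essentially the same route as the paper: chop the arc into pieces of quasi-hyperbolic length at most $M$, use the hypothesis to force geometric decay of $\de$ at the division points, and use the comparison $|\log\de(w)-\log\de(w')|\leq\qhl(\ti\ga)$ (equivalently, the paper's $\de_i^+\leq e^M\de_i^-$) to control $\de$ uniformly on each piece and hence bound its spherical length. The only cosmetic difference is that you re-decompose the sub-arc from $z_1$ to each given $z$, whereas the paper decomposes the whole arc once from $z_0$; this saves you one factor of $e^M$ and yields $\ve(M)=(2Me^M)^{-1}$ instead of the paper's $e^{-2M}/(2M)$.
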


\begin{proof}

Let $\ga$ be a concatenation of arcs $\ga_0\cdot\ga_1\cdot\ldots\cdot\ga_m$ with $\qhl(\ga_i)\leq M$ for $i=0,\ldots,m$.
Let $w_0=z_0,w_1,\ldots,w_m=z_1$ be their endpoints.
By hypothesis we may assume that for all $i=0,\ldots,m-1$
$$\de(w_i)=2^{-i}\de(w_0).$$

Let us denote $\de^+_i=\max\SetDef{\de(z)}{z\in\ga_i}$ and $\de^-_i=\min\SetDef{\de(z)}{z\in\ga_i}$. Then one may observe that
$$M\geq\qhl(\ga_i)\geq\int_{\de_i^-}^{\de_i^+}\frac{dx}x$$
and therefore
\begin{equation}
\de_i^+\leq e^M\de_i^-.
\label{equEM}
\end{equation}
As a consequence, for all $i=0,\ldots,m$
$$l(\ga_i)\leq\qhl(\ga_i)\de_i^+\leq Me^M\de(w_i)\leq Me^M2^{-i}\de(w_0)$$
so for all $z\in\ga_i$
\begin{equation}
\de(z,z_1)\leq 2^{-i}(2Me^M\de(w_0)).
\label{equD}
\end{equation}
Using inequality \requ{EM}, for all $z\in\ga_i$ and $i=0,\ldots,m$
$$\de(z)\geq e^{-M}\de(w_i)=e^{-M}2^{-i}\de(w_0),$$
which combined with inequality \requ{D} shows that for all $z\in\ga$
$$\de(z)\geq \frac{e^{-2M}}{2M}\de(z,z_1).$$

\end{proof}

H\"older regularity is more general than John regularity. In the particular case when the domain $\Om$ is simply connected, it is equivalent to that the Riemann mapping $\varphi\fd\D\Om$ can be extended to a H\"older continuous mapping on the closed unit disk, see Lemma 6 in \cite{CEH}. In this case $\pa\Om$ is locally connected by Carath\'eodory's theorem.

Let us write $A(\cdot)\apprle B(\cdot)$ whenever $A$ has at most order $O(B)$, that is there are constants $C_0>0$ and $C_1>0$ such that 
$$A(\cdot)\leq C_0B(\cdot) + C_1.$$
We also write $A(\cdot)\approx B(\cdot)$ when $A(\cdot)\apprle B(\cdot)$ and $B(\cdot)\apprle A(\cdot)$.

A domain $\Om\se\RS$ is a \emph{H\"older domain} if there is $z_0\in\Om$ such that for all $z\in\Om$ 
$$\qhd(z,z_0)\apprle -\log \de(z).$$

As a consequence of Proposition 3 in \cite{CEH}, the Main Theorem and the Complement to the Main Theorem (page 49) in \cite{ETIC} we obtain the following fact.

\begin{coro}
Let $f$ be a rational map of degree at least $2$. If $f$ satisfies $ExpShrink$ then all connected components of the Fatou set are H\"older domains. If $f$ has a fully invariant attractive Fatou component that is a H\"older domain, then $f$ satisfies $ExpShrink$.
\label{corESH}
\end{coro}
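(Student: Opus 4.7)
The plan is to deduce the corollary by combining the three cited results, since both implications are essentially packaged in them; what remains is to explain how they fit together and, for the forward direction, to transfer the Hölder estimate from immediate attracting basins to arbitrary Fatou components.

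For the forward implication, first invoke the Main Theorem of \cite{ETIC}, which asserts that ExpShrink is equivalent to TCE. In particular there are no parabolic cycles, Cremer points, Siegel disks or Herman rings, so by the classification of Fatou components every periodic Fatou component is the immediate basin of an attracting or super-attracting cycle, and every Fatou component eventually lands in such a basin. Proposition 3 in \cite{CEH} then supplies the Hölder estimate $\qhd(z,z_0)\apprle-\log\de(z)$ on each immediate basin. To pass to a general component $U$, I would take the minimal $n\geq 0$ with $V=f^n(U)$ periodic, fix a basepoint $z_V\in V$ together with a preimage $z_U\in U$, and pull back quasi-hyperbolic geodesics in $V$ via the branch of $f^{-n}$ sending $z_V$ to $z_U$. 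The ExpShrink hypothesis gives exponentially small diameters for pullback components and so Koebe-type distortion bounds on the branch; the Hölder inequality therefore survives on $U$ with constants depending only on $n$ and on those on $V$.

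For the reverse implication, suppose $A$ is a fully invariant attractive Fatou component that is Hölder. Then $J=\pa A$ and every pullback of a small ball centered on $J$ accumulates on $\pa A$. I would invoke the Complement to the Main Theorem on page 49 of \cite{ETIC}, which is precisely the converse statement needed: the Hölder bound on $A$ forces exponential shrinking of pullback components of small balls centered on $J$. Heuristically, a pullback component $W_n$ of $B(z,r)$ violating $\dia W_n<\lm n$ would produce points whose quasi-hyperbolic distance to the basepoint in $A$ grows linearly in $n$ while $\de(\cdot)$ decays only polynomially, contradicting the Hölder bound.

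The delicate step is the pullback in the forward direction for a non-periodic component $U$: one must check that the constants hidden in $\apprle$ survive composition with a branch of $f^{-n}$ whose domain may meet critical points, and ExpShrink is exactly the quantitative input that provides univalent pullback neighborhoods of definite size and controlled distortion via Koebe.
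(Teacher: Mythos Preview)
Your proposal is correct and matches the paper's approach: the corollary is presented there as an immediate consequence of Proposition~3 in \cite{CEH} together with the Main Theorem and its Complement (page~49) in \cite{ETIC}, with no further argument supplied. Your discussion of the pullback to non-periodic components simply elaborates a routine step that the paper leaves implicit in the citation.
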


\begin{proof}[Proof of \rthm{Main}]

Let $f$ be semi-hyperbolic. Then by aforementioned results all periodic components of its Fatou set are attracting, as $f$ satisfies $ExpShrink$.
If $J=\RS$ then there is nothing to prove.

Let us first show that an attracting periodic Fatou component $\Om$ is a John domain. Without loss of generality we may assume that $f(\Om)=\Om$. Let
$p\in\Om$ with $f(p)=p$ and $|f'(p)|<1$. 
Then all orbits in $\Om$ are attracted by $p$, that is for all $z\in\Om$ 
$$\limi n{f^n(z)}=p.$$
All derivatives are spherical derivatives unless specified otherwise.

We construct a domain $V$ with $\ol V\se\Om$ such that $p\in V$, $\ol{f(V)}\se V$ and for all $z\in\Om\sm V$ with $f(z)\in V$ we have $f(z)\notin f(V)$.

For any open $W\se\Om$ that contains $p$, we define $n_W\fd\Om\N$ such that $n_W(z)$ is the smallest iterate of $z$ that enters $W$. As $\Om$ is the immediate basin of attraction of $p$, $n_W$ is well defined on $\Om$.

Let $W=B(p,r_0)$ for some $r_0>0$ such that $\pa W$ does not intersect critical orbits and $\ol{f(W)}\se W$. Therefore $\pa f^{-k}(W)$ is smooth and $\ol{f^{-k}(W)}\se f^{-(k+1)}(W)$ for all $k\geq 0$. For all $k\geq 0$, let $W_k=\cmp_pf^{-k}(W)$ be the connected component of $f^{-k}(W)$ that contains $p$. Remark that $\ol{W_k}\se\Om$ for all $k\geq 0$.
Let also 
$$\SE{p,p_1}{p_m}=f\mo(p)\cap\Om.$$
Then there are arcs $\ga_1,\ldots,\ga_m\se\Om$ connecting $p$ to $p_1,\ldots,p_m$. By compactness, there is $k_0\geq 0$ such that for all $i=1,\ldots,m$
$$\ga_i\se W_{k_0}.$$

Then for all $k\geq k_0$, $W_k$ has the following properties
\begin{enumerate}
\item $\ol{f(W_k)}\se W_k$,
\item $f\mo(W_k)\cap\Om=W_{k+1}$.
\end{enumerate}
Indeed, $f\mo(W_k)\cap\Om$ is connected for all $k\geq k_0$. Otherwise it would contain a preimage of $p$ in $\Om$ outside $W_{k_0}\se W_{k+1}$.

Let $\la>1$, $r>0$ be provided by the $ExpShrink$ property of $f$. As there are no parabolic cycles, critical orbits in the Fatou set do not accumulate on the Julia set. By eventually shrinking $r$ we may assume that for all $z\in J$, $n\geq 0$ and $U$ a component of $f^{-n}(B(z,r))$
$$U\cap\cri\se J,$$
where $\cri$ is the set of critical points of $f$. As the critical orbits in the Julia set are not recurrent, we may also assume that there exists $\mu\geq 1$ such that

\begin{equation}
\deg_U f^n\leq \mu.
\label{equDeg}
\end{equation}

As $f$ is locally holomorphic, we may assume that the diameter of any such pullback $U$ is sufficiently small so that, by induction, it is simply connected.

By compactness there is $k_1\geq k_0$ such that $\pa W_{k_1}$ is contained in a $r/100$ neighborhood of $\pa\Om\se J$. We set
$$V=W_{k_1+1}.$$
Let $V_n=f^{-n}(V)\cap\Om=W_{k_1+n+1}$ and $n(z)=n_V(z)$ for all points $z\in\Om$. If $n(z)>0$ for some $z\in\Om$ then
$$f^{n(z)}\in V\sm f(V),$$
thus for all $k>0$
$$n^{-1}(k)=V_k\sm V_{k-1}.$$

Let us state a simplified version of a classical distortion control tool, the Koebe Theorem. As derivatives and distances are expressed with respect to the spherical metric we add a condition on the diameter of the image of the unit disk.

\begin{koebe}
There exists $\kappa>0$ and for all $D>1$ there is $\rho>0$ such that if  $g\fd\D\D$ is univalent then
$$B\left(g(0),\kappa |g'(0)|\right)\se g(\D)$$
and for all $z\in B(0,\rho)$
$$D\mo\leq \left|\frac{g'(z)}{g'(0)}\right|\leq D.$$
\end{koebe}

For more general statements of this theorem, see Theorem 1.3 and Corollary 1.4 in \cite{POM} or Lemma 2.5 in \cite{BvS}.

The following lemma will be used in the sequel together with Koebe's Theorem and is a direct consequence of the Monodromy Theorem. 
\begin{lem}
Let $U$ be a simply connected open, $g$ a rational map and $U'$ a connected component of $g\mo(U)$. If $g$ has no critical points in $U'$ then it is univalent on $U'$ and $U'$ is simply connected.
\label{lemSC}
\end{lem}

Let us prove that the quasi-hyperbolic length of arcs outside $V$ is not increased (except for an uniform constant) by pullbacks.
\begin{lem}
There exists an universal constant $K>1$ such that if $\ga\se\Om\sm f(V)$ is an arc and $\ga'\se\Om$ a homeomorphic pullback of $\ga$, that is $\ga'$ is a connected component of $f^{-k}(\ga)$ for some $k>0$, then
$$\qhl(\ga')\leq K\ \qhl(\ga).$$
\label{lemQHL}
\end{lem}

\begin{proof}
Critical orbits  outside $J$ do not approach $J$ closer than $r$ and for all $z\in\Om\sm f(V)$, $\de(z)<r/100$. Therefore if $z\in\ga$ then the (local) branch of $f^{-k}$ that sends $\ga$ to $\ga'$ is univalent on $B(z,\de(z))$. Indeed $B(z,\de(z))\se B(x,r)$ for some $x\in J$ so all preimages of $B(z,\de(z))$ are simply connected by \rlem{SC}. Univalence also follows by \rlem{SC}. Koebe Theorem applied to this local branch shows that the statement holds locally. Lemma follows by compactness of $\ga$.
\end{proof}

For all $z\in\Om$ we construct an arc $\ga_z\se\Om$ without self-intersections that connects $z$ to $p$ and avoids critical orbits. By compactness, there exists $L>0$ such that for all $z\in\ol V$ there is such an arc $\ga_z\se\ol V$ that connects $z$ to $p$ with 
$$\qhl(\ga_z)\leq L,$$
and such that $\ga'_z=\ga_z\sm f(V)$ has exactly one connected component.
Let $z\in\Om\sm\ol V$ and $m=n(z)$ except if $z\in\pa{V_{n(z)-1}}$ when $m=n(z)-1$. Let $y=f^m(z)$ and $\ga'_z=f^{-m}(\ga'_y)$ connect $z$ to $z'\in\pa V_{m-1}$. We define inductively $\ga_z$ as the concatenation
$$\ga_z=\ga'_z\cdot\ga_{z'}.$$

Using \rlem{QHL} and that $\ol V\se V_1$ we conclude that for all $z\in\Om$
\begin{equation}
\qhl(\ga_z)\apprle n(z).
\label{equQHLN}
\end{equation}

Let $z\in\Om\sm V$. Then $y=f^{n(z)}(z)\in V\sm f(V)$ therefore $\de(y)<r/4$. Using $ExpShrink$ we obtain that
$$\de(z)\leq \la^{-n(z)}.$$
One may also remark that
$$\de(\pa V)\cdot ||f'||_\ft^{-n(z)}\leq\de(z).$$
As a consequence of these inequalities we conclude that for all $z\in\Om$
\begin{equation}
-\log\de(z)\approx n(z).
\label{equDEZ}
\end{equation}

\begin{rem}
Relations \requ{QHLN} and \requ{DEZ} show that $\Om$ is a H\"older domain. This is an alternative proof of the direct implication of \rcor{ESH} as we do not use bound \requ{Deg}. With a similar construction, a stronger version of relation \requ{QHLN} and an estimate of $\de(z)$ that implies relation \requ{DEZ} have been proved in Lemma 7 in \cite{CEH}.
\end{rem}

Let us denote $\ga_z(k)=\ga_z\cap\ol{V_k}\sm V_{k-1}$, $\ga_z^k=\ga_z\sm V_{k-1}$ and $\lift(z,k)=\ga_z\cap\pa  V_{k-1}$ for all $k=1,\ldots,n(z)$. 
Using the last relation and \rlem{QHL} there exists $A>1$ such that for all $z\in\Om\sm V$ and $0<k<n(z)$
$$A^k\cdot l(\ga_z(k))\apprle \int_{\ga_z(k)}\frac {|d\xi|}{\de(\xi)}=\qhl(\ga_z(k))\leq K\cdot L,$$
therefore by summation
\begin{equation}
l(\ga_z^k)\apprle A^{-k}.
\label{equShort}
\end{equation}
We may therefore find $n_0>0$ such that for all $z\in\Om\sm\ol{n\mo(n_0)}$
$$l(\ga_z^{n_0})\leq\frac r{100}.$$

For $z\in\Om$ and $z'\in\ga_z$ we denote by $\ga_z^{z'}$ the arc $\ga'\se\ga$ that connects (or lifts) $z$ to $z'$.
By compactness and using relations \requ{DEZ} and \requ{QHLN} for all $\eta>0$ there exists $M>0$ such that if $n(z')\leq n_0$ then
\begin{equation}
\qhl(\ga_z^{z'})\geq M\Rightarrow \de(z)\leq \eta\cdot\de(z').
\label{equM}
\end{equation}

Let $\ga_w^{w'}\se\Om\sm V_{n_0}$ with $\qhl(\ga_w^{w'})\geq K\cdot M$ where $K$ is provided by \rlem{QHL}. We show that if $\eta$ is sufficiently small then
\begin{equation}
\de(w)\leq\frac12\de(w').
\label{equHalf}
\end{equation}
By \rlem{Deep} this means that $\Om$ is a John domain. 

The following statements are Lemmas 3 and 5 in \cite{MCN1}.
\begin{lem}
Let $g$ be a rational map, $z\in\overline{\mathbb C}$ and $0<r<R<1$. Let $W=B(z,R)^{-1}$
and $W'=B(z,r)^{-1}$ with $W'\subseteq W$ and $\MO{diam} W<1$. If $\Deg Wg
\leq\mu$ then
$$\frac{\MO{diam}{W'}}{\MO{diam}{W}} < 64\left(\frac rR\right)^{\frac 1\mu},$$
where $B\mo$ denotes a connected component of $g\mo(B)$.
\label{lemMod}
\end{lem}

If $A$ is an annulus and $C_1,C_2$ are the connected components of $\overline{\mathbb C}\setminus {\overline A}$ then we denote
$$\MOo{dist}{\RS\sm A}=\MOo{dist}{C_1,C_2}.$$
\label{HD}
Let us also denote 
$$\dst(\pa A)=\inf\SetDef{r>0}{\pa C_1\se\pa C_2 + B(0,r)\mbox{ and }\pa C_2\se\pa C_1 + B(0,r)},$$
the Hausdorff distance between the two components of the boundary of $A$. Let us also remark that 
\begin{equation}
\dst(\RS\sm A)\leq \dst(\pa A),
\label{equHD}
\end{equation}
with equality only when $A$ is a round annulus.
\begin{lem}
Let $A\subseteq\overline{\mathbb C}$ be an annulus and $C_1,C_2$ the
components of $\overline{\mathbb C}\setminus {\overline A}$.
For each $\alpha>0$ there exists
$\delta_\alpha > 0$ that depends only on $\alpha$ such that if $\MO{mod} A\geq\alpha$ then
$$\MOo{dist}{\overline{\mathbb C}\setminus A}\geq\delta_\alpha \min(\MO{diam} C_1,\MO{diam} C_2).$$
\label{lemThk}
\end{lem}

Let $m=n(w')-n_0$ so $n(z')=n(f^m(w'))=n_0$. Let also $z=f^m(w)$ and $x,x'\in \pa\Om\se J$ with $\de(x,z)=\de(z)$ and $\de(x',z')=\de(z')$. By construction 
$$\ga_z^{z'}=f^m(\ga_w^{w'}),$$
and  $\de(z')<\frac r{100}$, $l(\ga_z^{z'})<\frac r{100}$. Moreover, by the choice of $\ga_w^{w'}$, \rlem{QHL} and inequality \requ{M}
$$\de(z)<\eta\cdot\de(z').$$

Let $U$ be the connected component of $f^{-m}(B(x,r))$ that contains $w$ and $w'$. Let also $y,y'\in U$ be preimages of $x$ and $x'$ respectively, by the same branch of $f^{-m}$ (i.e. connected by a homeomorphic pullback of the path $[x,z]\cdot\ga_z^{z'}\cdot[z',x']$ that contains $w$ and $w'$).
Let $B_0=B(z,\de(z))$, $B_1=B(z,r/8)$, $B_2=B(z',r/4)$, $B_3=B(z',r/2)$ and $U_0$, $U_1$, $U_2$, $U_3$ their respective pullbacks by $f^{-m}$ such that $w\in U_0\subset U_1\subset U_2\subset U_3\subset U$. By \rlem{Mod}
\begin{eqnarray*}
\de(w) & \leq & \dia U_0\\
       & \leq & 64\ \dia U_1\left(\frac{8\ \de(z)}{r}\right)^{\frac 1\mu}\\
       & \leq & 64\ \eta^{\frac 1\mu}\dia U_2\left(\frac{8\ \de(z')}{r}\right)^{\frac 1\mu}.
\end{eqnarray*}
Therefore
\begin{equation}
\de(w)< 64\ \eta^{\frac 1\mu}\dia U_2.
\label{equDew}
\end{equation}
as $8\ \de(z')< r$.

As $\MO{mod}(B_3\sm B_2)>\frac{\log 2}{2\pi}$, an application of Gr\"otzsch inequality on conformal pullbacks of subannuli of $B_3\sm B_2$ that separate $U_2$ from the complementary of $U_3$ shows that
$$\MO{mod}(U_3\sm U_2)>\frac{\log 2}{2\pi\mu}.$$ 
For an explicit construction one may check the proof of \rlem{Mod} in \cite{MCN1}. By \rlem{Thk} there exists $d>0$ that depends only on $\mu$ such that
$$B(w',d\cdot \dia U_2)\se U_3.$$

Let $D=B(0,r')$ for some $0<r'<1$. The spherical, Euclidean and hyperbolic metric $\rho_D$ on $D$ are (uniformly on $r'$) comparable on $B(0,r'/2)$. Therefore there exists $\be\in(0,1)$ that does not depend on $r'$ such that for all $0<\te\leq\be/2$
$$B(0,\be\te r')\se\SetDef{\zeta\in D}{\rho_D(0,\zeta)<\te}\se B(0,\be\mo\te r').$$

Let $D'=B(w',d\cdot \dia U_2)$ and 
$$\te=\frac{2\be\de(z')}{r},$$
which is bounded from below as $n(z')=n_0$.
Then 
$$B(w',\be\te d\cdot \dia U_2) \se \SetDef{\zeta\in D'}{\rho_{D'}(w',\zeta)<\te}\se\SetDef{\zeta\in U_3}{\rho_{U_3}(w',\zeta)<\te}$$ 
and by Schwarz Lemma
$$f^m(B(w',\be\te d\cdot \dia U_2)) \se \SetDef{\zeta\in B_3}{\rho_{B_3}(z',\zeta)<\te} \se B\left(z',\be\mo\te\frac r2\right)=B(z',\de(z')).$$
Therefore $\be\te d\cdot \dia U_2\leq \de(w')$ which combined with inequality \requ{Dew} and the lower bound for $\te$ show inequality \requ{Half}, provided $\eta$ is sufficiently small.

We have shown that each periodic Fatou component is a John domain.
There are only finitely many such components. As any other component is a pullback of a periodic one, it is enough to show that pullbacks of $\Om$ are John domains. Let $\Om'$ be such a component with $f^p(\Om')=\Om$ and $V'=f^{-p}(V_{n_0})\se\Om'$. We may recall that for all $z\in\Om$, $\ga_z$ avoids critical orbits. For $w\in\Om'$ let $z=f^p(w)$ and $\ga_w$ be the component of $f^{-p}(\ga_z)$ that contains $w$. It connects $w$ to a preimage of $z_0$ in $\Om'$. Paths $\ga_w^{w'}\se\Om'\sm V'$ are lifted to $\ga_z^{z'}\se\Om$ with $n(z')=n_0$.
Again by \rlem{QHL} and inequality \requ{Half} it follows that $\Om'$ is a John domain, as there are only finitely many preimages of $z_0$ in $\Om'$.

Let $\ga$ be a Jordan curve and $D>1$. We say that $\ga$ is a $D$-quasicircle if for all $x,y\in\ga$, the subarc $\ga'$ of $\ga$ of smaller diameter that joins $x$ and $y$ satisfies
$$\dia \ga'\leq D\dst(x,y).$$
Both components of the complementary of a quasicircle on $\RS$ are John domains.

Let us show that there exists a rational map whose Fatou components are John domains but which is not semi-hyperbolic. Corollary 4.4 in \cite{YZ} provides a degree $2$ rational map $g$ which has two fixed Siegel disks $\De^0$ and $\De^\ft$ with the following properties. $\pa\De^0$ and $\pa\De^\ft$ are disjoint quasicircles, each containing a critical point $c_0$ and $c_\ft$ respectively. 

$\pa\De^0$ and $\pa\De^\ft$ are forward invariant sets and by Theorem V.1.1 in \cite{CG}, the orbits of $c_0$ and $c_\ft$ are dense in $\pa\De^0$ and $\pa\De^\ft$ respectively, as $g$ has no other critical points. Therefore both critical points are recurrent. By Theorems III.2.2, III.2.3, IV.2.1 and V.1.1, all Fatou components are preimages of $\De^0$ or $\De^\ft$. By \rlem{SC}, all Fatou components are simply connected and univalent. It is not hard to check that a preimage of a quasidisk (component of the complementary of a quasicircle) by a rational map is a John domain. Therefore all Fatou components of $g$ are John domains but both critical points are recurrent, thus $g$ is not semi-hyperbolic. 

\end{proof}


Let us show that if the Julia set of a semi-hyperbolic map is connected then Fatou components are John with a uniform constant. In the following section we use this result to show a stronger version of local connectivity of the Julia set.

\begin{prop}
Let $f$ be a semi-hyperbolic rational map with connected Julia set. There exists $\ve>0$ such that any Fatou component of $f$ is an $\ve$-John domain.
\label{propUnif}
\end{prop}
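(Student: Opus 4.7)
The plan is to trace through the proof of \rthm{Main} and verify that every constant may be chosen uniformly across all Fatou components when $J$ is connected; by \rlem{Deep}, a uniform halving constant $M$ yields a uniform John constant. The new input from the hypothesis is that every Fatou component $U$ is simply connected: indeed $\RS\sm U$ is the union of $J$ with the remaining Fatou components, and since $J$ is connected and the boundary of every other Fatou component lies in $J$, this union is connected, whence $U$ is simply connected.

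Since there are only finitely many periodic Fatou components, a common choice of the parameters $r_0, k_1, n_0, L, M, \eta$ in the proof of \rthm{Main} produces a uniform halving constant $M_0$ and hence a uniform John constant $\ve_0$ for the periodic components. For a non-periodic component $\Om'$ with $f^p(\Om')=\Om$ periodic, simple connectedness of $\Om'$ guarantees that the lift $\ga_w$ of $\ga_{f^p(w)}$ through $w$, as defined at the end of the proof of \rthm{Main}, is a well-defined arc in $\Om'$. By \rlem{QHL}, whose universal constant $K$ depends only on $f$, every sub-arc of $\ga_w$ lying in $\Om'\sm V'$ (with $V'=f^{-p}(V)\cap\Om'$) has $\qhl$ at most $K$ times the $\qhl$ of its image under $f^p$; the halving hypothesis for such sub-arcs therefore transfers from the periodic case to $\Om'$ with the uniform constant $KM_0$.

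The main obstacle is the portion of $\ga_w$ inside $V'$, in particular the tail joining the endpoint of the lift (some preimage of $z_0$) to a chosen base point $z_0'\in\Om'$. When $\Om$ contains critical points, the number of preimages of $z_0$ in $\Om'$ grows with $p$, so $V'$ need not have uniformly bounded $\qhd$-diameter. I would resolve this by organizing the inside-$V'$ arcs into a tree rooted at $z_0'$ whose edges are one-step pullbacks of a uniformly chosen family of arcs connecting preimages of $z_0$ in $V\sm f(V)\se\Om$; by \rlem{QHL} each such edge has $\qhl$ bounded by a uniform constant $L'$, and since successive pullbacks push the path toward $\pa\Om'$, the halving property along the tree paths follows from the contracting dynamics near the attracting cycle. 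Taking $M\geq\mx{KM_0}{L'}$ then yields the halving hypothesis for every sub-arc with $\qhl\geq M$, and \rlem{Deep} delivers a uniform $\ve$-John constant common to all Fatou components.
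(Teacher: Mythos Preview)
Your outline is correct and matches the paper up to and including the transfer of the halving constant to sub-arcs outside $V'$ via \rlem{QHL}. The divergence---and the gap---is entirely in how you handle the portion of $\ga_w$ inside $V'$.

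The paper's move here is much simpler than your tree construction, and it rests on an observation you state but do not exploit: since $J$ is connected, every Fatou component is simply connected, so by \rlem{SC} only finitely many Fatou components can be multivalent (namely those containing a critical point). Hence it suffices to prove a uniform John constant for \emph{univalent} pullbacks $\Om'$ of a periodic $\Om$. In that case $f^p\fd{\Om'}{\Om}$ has a global univalent inverse $g\fd{\Om}{\Om'}$, so there is exactly one preimage of $z_0$ in $\Om'$, and Koebe applied to $g$ on the compact $\ol{V_{n_0}}$ gives two uniform bounds independent of $p$: first, $\qhl(\ga_w)$ is uniformly bounded for $w\in V'$ (the proof of \rlem{QHL} goes through on all of $\ga_z$, including inside $f(V)$, because the specific branch $g$ is univalent); second, the ratio $\de(w)/\de(w')$ is uniformly bounded for $w,w'\in V'$ by covering $\ol{V_{n_0}}$ with finitely many Koebe-controlled balls. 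These two bounds dispose of the inside-$V'$ issue directly.

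Your tree argument, by contrast, is aimed at the multivalent case and is not justified as written. All nodes of your tree lie in $V'\se\Om'$, hence at the \emph{same} dynamical depth $p$ from $\Om$; the phrase ``successive pullbacks push the path toward $\pa\Om'$'' and the appeal to ``contracting dynamics near the attracting cycle'' do not give a mechanism for comparing $\de$ at different tree nodes, since $ExpShrink$ compares pullbacks at different iterates, not different points at the same iterate. In particular you have not shown that $\de$ doubles after every $M$ units of $\qhl$ along a tree edge, and it is unclear what replaces Koebe when $f^p$ fails to be univalent on $\Om'$. The paper sidesteps all of this by observing that the multivalent case concerns only finitely many components, which are already handled individually by \rthm{Main}.
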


\begin{proof}

Let $U$ be a Fatou domain of $f$. We call $U$ \emph{multivalent} if $f$ is not univalent on $U$. As the Julia set is connected all Fatou components are simply connected therefore by \rlem{SC} there are only finitely many multivalent Fatou components. 

We show that univalent pullbacks of $\Om$ are uniformly John domains. The general case can be treated with minor modifications. Let us use the notations introduced in the previous proof and assume that $f^p\fd{\Om'}\Om$ is univalent. By the proof of the Main Theorem there exists $M>0$ that does not depend on the choice of $\Om'$ such that if $\qhl(\ga_w^{w'})\geq M$ and $\ga_z^{z'}=f^p(\ga_w^{w'})\se\Om\sm V_{n_0}$ then 
$$\de(w)\leq\frac 12\de(w').$$
Therefore the only obstacle to uniformity is related to $\qhl(\ga_w)$ and $\de(w)$ when $w\in V'\Leftrightarrow z\in V_{n_0}$. As $f^p$ is univalent on $\Om'$, \rlem{QHL} applies to $\ga_z$ for all $z\in\Om$. Therefore $\qhl(\ga_w)$ is uniformly bounded (independently of the choice of $\Om'$). To complete the proof we show that there is a bound $R>0$ that depends only on $\Om$ and $V_{n_0}$ such that for all $w,w'\in V'$
\begin{equation}
\frac{\de(w)}{\de(w')}\leq R.
\label{equR}
\end{equation}

Let $g\fd\Om{\Om'}$ be a univalent branch of $f^{-p}$. Let $\rho=\rho(2)$ provided by Koebe's Theorem. Let us cover $\ol {V_{n_0}}$ with $m$ balls $B(x_i,r_i)$ such that for all $i=1,\ldots,m$
$$r_i\leq \rho\ \de(x_i).$$
then for all $z,z'\in V_{n_0}$
$$\left|\frac{g'(z)}{g'(z')}\right|\leq 4^m.$$

If 
$$S=\sup\limits_{z,z'\in V_{n_0}}\frac{\de(z)}{\de(z')},$$
again by Koebe Theorem applied to $g$ and $g\mo$, we may define $R$ in inequality \requ{R} by
$$R=\kappa^{-2}4^m S.$$

\end{proof}

\section{Local connectivity}


Let us show that connected Julia sets of semi-hyperbolic maps satisfy a slightly stronger version of local connectivity. The construction developed for this purpose is extended to prove \rthm{LC}.

\begin{prop}
If the Julia set of a semi-hyperbolic rational map is connected then it is locally connected. Moreover, there is $\ve>0$ such
that the Julia set satisfies the local connectivity definition with $\tau=\ve\mo\te$.
\label{propSLC}
\end{prop}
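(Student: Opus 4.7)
The plan is to extend the $\ve_0$-John arcs supplied by \rpro{Unif} into continua inside the Julia set, using a decomposition of the spherical geodesic connecting $a$ and $b$. Let $\ve_0>0$ be the uniform John constant from \rpro{Unif}, together with the associated $\ve_0$-John arcs $\ga_z\se\Om$ joining each $z\in\Om$ to the John center $z_0(\Om)$ and satisfying $\de(\xi)\geq\ve_0\de(\xi,z)$ for all $\xi\in\ga_z$. Given $a,b\in J$ with $\de(a,b)<\te$, consider the spherical geodesic $\sigma$ joining them. The set $\sigma\cap J$ is closed, and its complement in $\sigma$ is a countable disjoint union of open sub-arcs $\sigma_i=(x_i,y_i)$, each contained in a unique Fatou component $\Om_i$ with $x_i,y_i\in\pa\Om_i\se J$ and $\de(x_i,y_i)\leq\de(a,b)<\te$.

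The key step is, for each $i$, to replace $\sigma_i$ by a continuum $K_i\se\pa\Om_i\se J$ containing $x_i,y_i$ of diameter at most $C\de(x_i,y_i)/\ve_0$ for a universal constant $C$. For sequences $z_n\to x_i$ and $w_n\to y_i$ inside $\Om_i$, consider the initial portions of $\ga_{z_n}$ and $\ga_{w_n}$ truncated at depth $D:=\de(x_i,y_i)$ from $\pa\Om_i$; by the John inequality these have diameter at most $D/\ve_0$. Taking Hausdorff subsequential limits and joining them through the thick part of $\Om_i$ at depth $\sim D$ produces a continuum $\tilde K_i\se\ol\Om_i$ joining $x_i$ to $y_i$ with $\dia\tilde K_i$ bounded linearly by $D/\ve_0$. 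Since $\Om_i$ is simply connected (as $J$ is connected) and $\ve_0$-John, its Riemann map extends continuously to $\ol\D$ by Carath\'eodory, so one may project $\tilde K_i$ onto $\pa\Om_i$ via the prime-end correspondence to obtain $K_i\se J$ with the same linear diameter bound up to a constant.

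Assembling, the set $B:=(\sigma\cap J)\cup\bigcup_i K_i$ is a continuum in $J$ containing $a$ and $b$; connectedness holds because each $K_i$ bridges the endpoints of the removed $\sigma_i$. Its diameter is at most $2\te(1+C/\ve_0)$, so choosing $\ve:=\ve_0/(2\ve_0+2C)$ gives $\dia B\leq\ve\mo\te$, as required. The main obstacle is the construction of $K_i\se\pa\Om_i$ with linear (rather than merely H\"older) diameter control: the John arcs live inside $\Om_i$, so the natural continuum $\tilde K_i$ lies in $\ol\Om_i$, and projecting it onto $\pa\Om_i$ requires careful use of the Carath\'eodory extension. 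The linear scaling is tied directly to the $\ve_0$-John inequality itself, applied uniformly across Fatou components by \rpro{Unif}; this linearity is what will be lost in the extension to $SumShrink$ maps in \rthm{LC}.
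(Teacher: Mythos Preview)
Your overall architecture---decompose the geodesic $\sigma$ from $a$ to $b$ into its intersection with $J$ and open sub-arcs $\sigma_i\subset\Om_i$, replace each $\sigma_i$ by a continuum $K_i\subset\pa\Om_i\subset J$ of diameter linear in $\de(x_i,y_i)$, and assemble---matches the paper's proof exactly. The gap is in your construction of $K_i$. The phrase ``project $\tilde K_i$ onto $\pa\Om_i$ via the prime-end correspondence'' does not describe a well-defined operation: the Carath\'eodory extension $\ol\varphi:\ol\D\to\ol{\Om_i}$ is a continuous surjection of $\pa\D$ onto $\pa\Om_i$, but it furnishes no projection of an interior continuum $\tilde K_i\subset\ol{\Om_i}$ onto $\pa\Om_i$, let alone one with linear diameter control. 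Your intermediate step of ``joining through the thick part of $\Om_i$ at depth $\sim D$'' is also unjustified: two points at depth $D$ and mutual distance $\sim D/\ve_0$ need not be connectable inside $\{\de\geq cD\}$ by a path of comparable length without further argument. Finally, Hausdorff limits of arcs need not be arcs, so $\tilde K_i$ may fail to be a crosscut in any useful sense.

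The paper avoids all of this with a one-line \emph{crosscut lemma}: since $\Om_i$ is simply connected and $\ve$-John, the straight segment $\ol I=[x_i,y_i]$ is already a crosscut, and the component $D$ of $\Om_i\sm[x_i,y_i]$ not containing the John center $z_0$ satisfies $\dia D\leq\ve\mo\de(x_i,y_i)$. Indeed, for $z\in D$ the John arc $\ga_z$ must cross $[x_i,y_i]$ at some $z'$, where $\ve\,\de(z,z')\leq\de(z')\leq\de(z',\{x_i,y_i\})$; the triangle inequality finishes it. Then $K_i:=\pa D\sm I\subset\pa\Om_i$ is the required continuum, and Carath\'eodory is invoked only to observe that $K_i$ is an arc. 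No limits, no joining, no projection---the John arcs are used only to bound the small side of the crosscut, never to build $K_i$ itself.
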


\begin{proof}
By \rpro{Unif} there is $\ve>0$ such that any Fatou component $U$ is a simply connected $\ve$-John domain. We use an alternative definition of simply connected John domains given by Theorem 4.4 in \cite{QHJ}. As we only need the easy part of this theorem, we include a proof here for completeness.

If $U$ is a simply connected domain we say that the segment $[a,b]$ is a \emph{crosscut} of $U$ if $[a,b]\cap\pa U=\{a,b\}$ and $[a,b]\se\ol U$. 
\begin{lem}
Let $U$ be a $\ve$-John simply connected domain, $[a,b]$ a crosscut of $U$ and $U_1$, $U_2$ the connected components of $U\sm[a,b]$. Then
$$\min(\dia U_1,\dia U_2)\leq \ve\mo\ \de(a,b).$$
\label{lemCross}
\end{lem}

\begin{proof}
Let $z_0$ be the base point of $U$ with respect to which it is an $\ve$-John domain. Let also $U'$ be the component of $U\sm[a,b]$ that does not contain $z_0$. Let $x,y\in U'$ and $\ga_x,\ga_y$ the paths that connect $z_0$ to $x$ and $y$ respectively, provided by the definition of John domains. Let $x'\in[a,b]\cap\ga_x$ and $y'\in[a,b]\cap\ga_y$. We may choose the order of $x',y'\in[a,b]$ such that 
$$\de(a,b)=\de(a,x') + \de(x',y')+\de(y',b).$$
As $\ve\de(x,x')\leq\de(x')\leq \de(a,x')$ and $\ve\de(y',y)\leq\de(y')\leq \de(y',b)$, the triangle inequality completes the proof.
\end{proof}

Let $\tau>0$ and $a,b\in J$ with $\de(a,b)<\ve\tau$. We build a continuum $C\se J$ that contains $a$ and $b$ with
\begin{equation}
\dia C<\tau,
\label{equCont}
\end{equation}
therefore $J$ is locally connected.

Let $E=\pa([a,b]\cap J)$ with respect to the topology of the real line and $B=[a,b]\sm E$. $E$ is compact with empty interior thus $B$ is a dense open in $[a,b]$. For any connected component $I$ of $B$, $\pa I\se J$ and $I\se J$ or $I\cap J=\es$. If $I\se J$ we define 
$$C(I)=\ol I.$$
If $I\cap J=\es$ then $\ol I$ is a crosscut of a Fatou component $U$. Let $D$ be the connected component of $U\sm [a,b]$ with smaller diameter. Then we define
$$C(I)=\pa D\sm I.$$
In this case $C(I)$ is the image of a round arc by Carath\'eodory's theorem, as $\pa U$ is locally connected.

In both cases $C(I)\se J$ is a continuum that contains $\pa I$ with
\begin{equation}
\dia C(I)\leq\ve\mo\ \dia I,
\label{equDiam}
\end{equation}
by \rlem{Cross}. 

Let $\sqnz I$ be a sequence that contains every connected component of $B$ exactly once.
Let
$$C'=E\cup\bigcup_{n\geq 0}C(I_n)$$
and
$$C=\ol{C'}\se J.$$

We show that $C'$ is connected therefore $C$ is a continuum. Suppose that there are two sets $A_1$ and $A_2$ with $A_1\cap C'\neq\es$, $A_2\cap C'\neq\es$, $\ol{A_1}\cap A_2=A_1\cap \ol{A_2}=\es$ and $C'\se A_1\cup A_2$. We may assume that $a\in A_1$ so $E\cap A_2\neq\es$, otherwise $C'\se A_1$. Let 
$$x=\inf (E\cap A_2),$$ 
where $[a,b]$ is identified to $[0,1]$ for readability reasons. Suppose that $x=\sup I_k$ for some $k\geq 0$. Then $\inf I_k\in A_1$ so $C(I_k)\se A_1$ as $C(I_k)$ is connected. But then $x\in A_1\cap\ol{A_2}$, a contradiction. Thus $x$ is an accumulation point of $E\cap A_1$. If $x=\inf I_k$ for some $k\geq 0$ then $C(I_k)\se A_2$ by the definition of $x$. But this contradicts $\ol{A_1}\cap A_2=\es$. Therefore $x$ is an accumulation point of $E\cap A_2$. But this yields again a contradiction as $x\in E\se C'\se A_1\cup A_2$. Therefore $C'$ is connected.

Let us show that 
$$\dia C'\leq \ve\mo\ \de(a,b)$$
which implies inequality \requ{Cont} thus completing the proof.
Let us remark that $\dia C'=\dia(C'\sm E)$ as $E=\pa B$. It is enough to show that if $x\in C(I_n)$ and $y\in C(I_m)$ for some $n,m\geq 0$ then $\de(x,y)\leq \ve\mo\ \de(a,b)$. Let $\{x_1,x_2\}=\pa I_n$ and $\{y_1,y_2\}=\pa I_m$. We may assume $a\leq x_1 < x_2\leq y_1<y_2\leq b$ as the case $n=m$ is trivial. By inequality \requ{Diam}
$$\de(x,x_2)\leq \ve\mo\ \de(x_1,x_2) \ma \de(y_1,y)\leq \ve\mo\ \de(y_1,y_2).$$
Conclusion is reached by the triangle inequality.

\end{proof}


\begin{prop}
Let $K\se\RS$ be a continuum and $\sqnz U$ the sequence of connected components of its complementary $\RS\sm K$. If all $\pa U_n$ are locally connected and
$$\limi n{\dia U_n}=0$$
then $K$ is locally connected.
\label{propLC}
\end{prop}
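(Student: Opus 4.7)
The plan is to adapt the construction from the proof of \rpro{SLC}: given $\tau>0$ and points $a,b\in K$ sufficiently close, I follow the spherical geodesic segment $[a,b]$, and whenever it exits $K$ into some complementary component $U_n$ I replace that excursion by a piece of $\pa U_n\se K$ joining the two crossing points. The concatenation yields a continuum $C\se K$ containing $a$ and $b$, of diameter less than $\tau$.

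Concretely, use $\dia U_n\to 0$ to pick $N$ so that $\dia U_n<\tau/4$ for all $n>N$. For each $n\leq N$, local connectivity of $\pa U_n$ furnishes $\theta_n>0$ such that any two points of $\pa U_n$ at distance less than $\theta_n$ lie in a subcontinuum of $\pa U_n$ of diameter less than $\tau/4$. Set $\theta=\min(\tau/2,\theta_0,\ldots,\theta_N)$. For $a,b\in K$ with $\de(a,b)<\theta$, define $E=\pa([a,b]\cap K)$ (boundary in $[a,b]$) and $B=[a,b]\sm E$, with components $\{I_k\}$. For $I_k\se K$ set $C(I_k)=\ol{I_k}$; for $I_k\se U_{m(k)}$, whose closure in $[a,b]$ has endpoints $p_k,q_k\in\pa U_{m(k)}$, put $C(I_k)=\pa U_{m(k)}$ if $m(k)>N$, and otherwise use $\de(p_k,q_k)\leq\de(a,b)<\theta\leq\theta_{m(k)}$ to extract from local connectivity a subcontinuum of $\pa U_{m(k)}$ containing $\{p_k,q_k\}$ of diameter less than $\tau/4$. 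Finally let $C=\ol{E\cup\bigcup_k C(I_k)}\se K$.

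The step needing the most care is that each $\pa U_n$ is connected, so that $\pa U_{m(k)}$ qualifies as the desired subcontinuum when $m(k)>N$. This follows from unicoherence of $\RS$: $\ol{U_n}$ is a continuum, and $\RS\sm U_n=\bigcup_{m\neq n}(K\cup\ol{U_m})$ is a union of connected sets all sharing the common subcontinuum $K$, hence itself connected; their union exhausts $\RS$, so by unicoherence their intersection $\pa U_n$ is connected. For $n>N$ we then have $\dia\pa U_n\leq\dia U_n<\tau/4$, so $\pa U_{m(k)}$ is indeed a valid choice.

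Connectedness of $C$ follows verbatim from the order-based argument of \rpro{SLC}: a hypothetical separation $C'=A_1\cup A_2$ with $\ol{A_1}\cap A_2=A_1\cap\ol{A_2}=\es$ forces, at $x=\inf(E\cap A_2)$, some $C(I_k)$ to be connected yet meet both $A_1$ and $A_2$, a contradiction. For the diameter estimate, every $x\in C$ lies within $\tau/4$ of some endpoint of an $I_k\se[a,b]$, hence within $\tau/4$ of the segment $[a,b]$; for $x,y\in C$ this yields $\de(x,y)\leq\tau/4+\de(a,b)+\tau/4<\tau$, so $\dia C<\tau$ and $K$ is locally connected.
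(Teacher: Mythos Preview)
Your proof is correct and follows essentially the same route as the paper: split the small complementary components from the finitely many large ones, invoke local connectivity of $\pa U_n$ on the latter, and then run the segment-replacement construction from \rpro{SLC}. The paper's own proof is a two-line sketch that simply points back to that construction; you have filled in the details it omits, in particular the observation that each $\pa U_n$ is connected (via unicoherence of $\RS$), which is needed to justify taking $C(I_k)=\pa U_{m(k)}$ for $m(k)>N$ and which the paper leaves implicit.
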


\begin{proof}
For all $\tau>0$ there is $m>0$ such that for all $n> m$
$$\dia U_n<\frac\tau 3.$$
Let $\te>0$ such that for $n=0,\ldots,m$ and all $a,b\in\pa U_n$ with $\de(a,b)<\te$ there exists a continuum $B\se\pa U_n$ that contains $a$ and $b$ with
$$\dia B<\frac\tau 3.$$
Using the construction described in the proof of \rpro{SLC}, $\te$ satisfies the definition of local connectivity for $K$.
\end{proof}

A domain regularity that is more general than H\"older regularity is considered in \cite{NUH}. A domain $\Om$ is called \emph{integrable} if there exists $z_0\in\Om$ and an integrable function $H\fd{\R_+}{\R_+}$,
$$\int_0^\ft H(r)dr<\ft,$$
such that $\Om$ satisfies the following \emph{quasi-hyperbolic boundary condition}. For all $z\in\Om$
$$\de(z)\leq H(\qhd(z,z_0)).$$ 
H\"older domains correspond to \emph{exponentially fast integrable} domains, that is with $H(r)=\exp(C-\ve r)$. However, John domains and H\"older domains cannot be distinguished by their integrability function $H$. An immediate consequence of Lemma 11.5 and Fact 11.1 in \cite{NUH} is that all connected components of the boundary of an integrable domain are locally connected. For any attracting periodic Fatou component of a rational map, integrability is characterized in terms of derivative growth on backward orbits inside the domain, see Lemma 11.1 in \cite{NUH}. 
Therefore, an immediate consequence of Koebe Theorem shows that $SumShrink$ implies local connectivity of components of the boundary of periodic Fatou components.
In the same paper (Theorem 11), Graczyk and Smirnov show that this holds for rational maps that satisfy a given \emph{summability condition}, a generalization of the Collet-Eckmann condition. This condition does not imply nor is it a consequence of $ExpShrink$. For more details, see the concluding section.

\begin{proof}[Proof of \rthm{LC}]
If $J=\RS$ there is nothing to prove, therefore we assume that the Fatou set is non-empty. As discussed in the introduction, by $SumShrink$, the Fatou set consists of finitely many attracting periodic components and their preimages. The Julia set is connected therefore Fatou components are simply connected. Therefore the boundary of periodic Fatou components are locally connected. As pullbacks of locally connected compacts by rational maps (iterates of $f$), all boundaries of Fatou components are locally connected.

By \rlem{SC}, there are only finitely many multivalent Fatou components. Using \rpro{LC}, it is enough to show that the diameters of univalent pullbacks of some Fatou component $U$ tend to $0$. Let
$$\varphi\fd\D U$$
be the Riemann mapping which extends continuously to $\ol\D$ by Carath\'eodory's theorem.
Let $A=U\sm\varphi(\ol{B(0,R)})$ be an annulus with $0<R<1$ such that
$$\dst(\pa A)<\frac r2,$$
where $r$ is given by $SumShrink$ and $\dst(\pa A)$ denotes the Hausdorff distance between the components of $\pa A$ (see definition on page \pageref{HD}).

Let $\sqnz U$ be a sequence of univalent pullbacks of $U=U_0$ such that $f(U_{n+1})=U_n$ for all $n\geq 0$. Let also $\sqno A$ be the corresponding pullbacks of $A$. Therefore for all $n>0$
$$\MO{mod} A=\MO{mod} A_n,$$
and using a cover of $A$ with balls of radius $r$ centered on $\pa U\se J$, by $SumShrink$
$$\limi n{\dst(\pa A_n)}=0.$$

Let $C_n$ and $K_n$ be the connected components of $\RS\sm A_n$ with $\dia C_n\leq\dia K_n$ for all $n>0$. Remark that
\begin{equation}
\dia (C_n\cup A_n)\leq \dia C_n + 2 \dst(\pa A_n).
\label{equDia}
\end{equation}
If $n$ is sufficiently large $\dst(\pa A_n)<1/4$ and using \rlem{Thk} $\dia C_n<1/2$. Then $K_n$ contains half the Riemann sphere. Therefore there is at most one (sufficiently large) $n$ such that $K_n\se U_n$. Therefore, for all but finitely many $n>0$
$$U_n=A_n\cup C_n.$$
By \rlem{Thk} and inequalities \requ{HD} and \requ{Dia}
$$\limi n{\dia U_n}=0,$$
which  completes the proof.
\end{proof}

\section{Further remarks}

In \cite{Deep}, the hypothesis of \rlem{Deep} is shown to be an equivalent definition of John regularity.
For simply connected domains, quasi-hyperbolic and hyperbolic metrics are comparable. In this case \rlem{Deep} has been used in \cite{JJ}, see also \cite{J}. In \cite{QHJ} it is proved that quasi-hyperbolic geodesics can replace arbitrary paths in the definition of the John regularity only in the simply connected case. 

A stronger version of John regularity, \emph{uniformly John} property, is considered in \cite{UJ}. In the case of simply connected domains it is equivalent to John regularity. Polynomials whose basin of infinity satisfy this property are characterized in terms of topological properties of critical orbits in the Julia set.

Graczyk and Smirnov proved in \cite{CEH} that Fatou components of a \emph{Collet-Eckmann} map (see definition below) are H\"{o}lder domains. The converse problem was considered by Przytycki in \cite{HCE}. It holds provided the orbit of each critical point in the Julia set of a polynomial does not accumulate on other critical points. The existence of a fully invariant Fatou component is essential, as is the case in \rcor{ESH}.

Relations between derivative growth and the geometry of Fatou components have also been studied in \cite{JCT}. All aforementioned regularity conditions are discussed in a systematic way.

In \cite{WHPOP} it is proved that polynomial derivative growth on repelling periodic orbits of a polynomial implies that the basin of attraction of infinity is an integrable domain. More precisely, it is required that the derivative on repelling periodic orbits of period $n$ is of order at least $n^{5+\ve}$. As a consequence, if the Julia set is connected then it is locally connected. This result has been improved in \cite{IWHPOP}, only growth of order $n^{3+\ve}$ is required in the rational case.
 
The assumption $J$ connected in \rpro{Unif} can be replaced by the condition that there are only finitely many multivalent Fatou components. If this condition fails then it is not hard to show that there are two critical points that are separated by infinitely many Fatou components. \emph{A priori}, this situation cannot be excluded. Similar phenomena may occur even for hyperbolic dynamics, see examples of dynamics in the last chapter of \cite{Bea}.

Suppose the Julia set is not connected, the components of the Fatou set are integrable domains, and their diameter tends to $0$. Then one may show that the connected components of the Julia set are locally connected. Only minor modifications in the proof of \rpro{LC} are needed.

Finally, let us define the summability condition as considered by Graczyk and Smirnov in \cite{NUH}. Let $f$ be a rational map of degree at least $2$, $J$ its Julia set and $\cri$ its critical set. For technical reasons we assume that critical orbits in the Julia set do not contain critical points but an additional construction overcomes this obstacle. Let 
$$\si_n:=\min\SetDef{\abs{(f^n)'(f(c))}}{c\in\cri\cap J}.$$
Suppose also that $f$ has no parabolic periodic points. We say that $f$ satisfies the \emph{summability condition} with exponent $\al$ if
$$\sum_{i=1}^\ft(\si_n)^{-\al}<\ft.$$
This condition generalizes the Collet-Eckmann condition which requires exponential growth of $\sqno\si$.
Let also $\mu_{max}$ be the maximal multiplicity of critical points in $J$.
Proposition 7.2 in \cite{NUH} shows that if $f$ satisfies the summability condition with exponent 
$$\al=\frac 1{1+\mu_{max}},$$
then $f$ satisfies $SumShrink$, so \rthm{LC} applies.

\bigskip
\textbf{Acknowledgments.} The author would like to thank Jacek Graczyk who suggested that a connected Julia set should be locally connected when the Fatou components are John domains. Interesting questions and observations of Juan Rivera-Letelier and Neil Dobbs helped improve the presentation of the paper. Juan Rivera-Letelier pointed out the construction of Yampolsky and Zakeri and a mistake in the very first version of this paper.

\end{document}